\newcommand{\mc}{\mathcal }
\newtheorem{thm}{Theorem}[section]
\newcommand{\go}[1]{\mathfrak{#1}}
\newcommand{\R}{{\rm I}\kern-0.18em{\rm R}}
\newcommand{\1}{{\rm 1}\kern-0.25em{\rm I}}
\newcommand{\E}{{\rm I}\kern-0.18em{\rm E}}
\newcommand{\p}{{\rm I}\kern-0.18em{\rm P}}
\def\@fnsymbol#1{\ensuremath{\ifcase#1\or a\or b\or c\or d\or \e\or f\or *\dagger 	\or \ddagger\ddagger \else\@ctrerr\fi}}
\title{Characterization of multivariate distributions by means of univariate one}
\author{Lev B. Klebanov\footnote{Department of Probability and Mathematical Statistics, Charles University, Prague, Czech Republic, e-mail: lev.klebanov@mff.cuni.cz} and Irina Volchenkova\footnote{Czech Technical University in Prague, Prague, Czech Republic}}
\date{}
\begin{document}
\maketitle

\begin{abstract}
The aim of this paper is to show a possibility to identify multivariate distribution by means of specially constructed one-dimensional random variable. We give some inequalities which may appear to helpful for a construction of multivariate two-sample tests. 

{\bf Key words}: inequalities; multivariate distributions; two-sample tests
\end{abstract}

\section{Inequalities for multivariate distributions}\label{sec1}
\setcounter{equation}{0}

Let $X$ and $Y$ be two independent random vectors in Euclidean space $\R^d$ or in separable Hilbert space $\go H$. Suppose that $X^{\prime}$ $Y^{\prime}$ are independent copies of $X$ and $Y$ correspondingly, that is $X^{\prime}\stackrel{d}{=}X$ and  $Y^{\prime}\stackrel{d}{=}Y$. The simbol $\stackrel{d}{=}$ here denotes the equality in distribution. Consider two random variables $\varepsilon$ and $\delta$ taking two values $0$ and $1$ with equal probabilities. In addition, let us suppose that the objects $X, X^{\prime}$,  $Y,Y^{\prime}$, $\varepsilon, \delta$ are mutually independent. Define one-dimensional random variable
\begin{equation}\label{eq1}
 S= \delta \|X-Y\| -(1-\delta) \Bigl(\varepsilon\|X-X^{\prime}\|+(1-\varepsilon)\|Y-Y^{\prime}\|\Bigr).
\end{equation}

The following result holds.

\begin{thm}\label{th1}
Let the random objects $X, X^{\prime}$, $Y,Y^{\prime}$, $\varepsilon, \delta$ are as defined above. Then the random vectors $X$ and $Y$ are identically distributed if and only if $S$ has a symmetric distribution.
\end{thm}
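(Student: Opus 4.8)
The plan is to convert the statement ``$S$ is symmetric'' into an identity between three scalar distributions, and then to read that identity as the vanishing of an energy-type functional that forces $X\stackrel{d}{=}Y$.

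\emph{Reduction.} Write $\mu$, $\nu_1$, $\nu_2$ for the laws of $\|X-Y\|$, $\|X-X'\|$, $\|Y-Y'\|$; these are probability measures on $[0,\infty)$. Conditioning on the pair $(\delta,\varepsilon)$ in \eqref{eq1} exhibits the law of $S$ as the mixture $\tfrac12\mu+\tfrac14\widetilde{\nu_1}+\tfrac14\widetilde{\nu_2}$, where a tilde denotes the push-forward under $z\mapsto-z$; hence the law of $-S$ is $\tfrac12\widetilde{\mu}+\tfrac14\nu_1+\tfrac14\nu_2$. Comparing the restrictions of these two probability measures to $(0,\infty)$ shows that $S$ is symmetric if and only if $2\mu=\nu_1+\nu_2$ on $[0,\infty)$, i.e.\ if and only if $2\,\E h(\|X-Y\|)=\E h(\|X-X'\|)+\E h(\|Y-Y'\|)$ for every bounded Borel function $h$. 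In particular the ``only if'' direction of the theorem is immediate: when $X$ and $Y$ have a common law, $(X,X')$, $(X,Y)$ and $(Y,Y')$ share a joint distribution, so $\mu=\nu_1=\nu_2$ and $S$ is symmetric.

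\emph{Converse, Euclidean case.} Assume $S$ symmetric and use the reduction with $h(r)=e^{-tr}$, $t>0$ fixed. Putting $P=\mathrm{Law}(X)$, $Q=\mathrm{Law}(Y)$ and $\sigma=P-Q$, and evaluating each term by independence, the identity becomes
\[
\iint e^{-t\|x-y\|}\,d\sigma(x)\,d\sigma(y)=0 .
\]
By Schoenberg's theorem $\|\cdot\|$ is a negative definite kernel on $\R^d$ (and on $\go H$), so $e^{-t\|\cdot\|}$ is positive definite; the point is that it is in fact \emph{characteristic}, i.e.\ the displayed integral can vanish only if $\sigma=0$. In $\R^d$ this is immediate from $e^{-t\|z\|}=\int_{\R^d}e^{i\langle\xi,z\rangle}p_t(\xi)\,d\xi$ with $p_t>0$ the density of a scaled multivariate Cauchy law: the integral equals $\int_{\R^d}|\widehat P(\xi)-\widehat Q(\xi)|^2\,p_t(\xi)\,d\xi$, so $\widehat P\equiv\widehat Q$ and $P=Q$.

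\emph{Converse, Hilbert-space case; the main obstacle.} Since no representing probability measure for $e^{-t\|\cdot\|}$ exists on $\go H$, I would subordinate: $e^{-t\|z\|}=\int_0^\infty e^{-s\|z\|^2}\,d\rho_t(s)$ with $\rho_t$ the positive stable-$\tfrac12$ law, which has full support on $(0,\infty)$. The function $s\mapsto\iint e^{-s\|x-y\|^2}\,d\sigma\,d\sigma$ is continuous, nonnegative (Gaussian kernels are positive definite) and has zero $\rho_t$-integral, hence vanishes for every $s>0$. Now expand $e^{-s\|x-y\|^2}=e^{-s\|x\|^2}e^{-s\|y\|^2}\sum_{k\ge0}\tfrac{(2s)^k}{k!}\langle x,y\rangle^k$ with $\langle x,y\rangle^k=\langle x^{\otimes k},y^{\otimes k}\rangle$; the exchange of sum and integral is justified by $\iint e^{2s\|x\|\,\|y\|-s\|x\|^2-s\|y\|^2}\,d|\sigma|\,d|\sigma|\le\|\sigma\|_{\mathrm{TV}}^2<\infty$, and it gives $\sum_{k\ge0}\tfrac{(2s)^k}{k!}\bigl\|\int x^{\otimes k}e^{-s\|x\|^2}\,d\sigma(x)\bigr\|^2=0$. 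Thus every tensor moment of the finite signed measure $e^{-s\|\cdot\|^2}\sigma$ is zero; expanding $e^{i\langle x,g\rangle}$ and using these moments term by term gives $\int e^{i\langle x,g\rangle}e^{-s\|x\|^2}\,d\sigma(x)=0$ for all $g\in\go H$, $s>0$, and letting $s\downarrow0$ (dominated convergence) yields $\widehat P=\widehat Q$ on $\go H$, so $P=Q$. The conditioning computation and the measure-theoretic bookkeeping are routine; the one genuinely delicate point is exactly that the exponential kernel — equivalently, after subordination, the Gaussian kernel — is characteristic on an infinite-dimensional separable Hilbert space, which is what the subordination-plus-moments detour is designed to supply.
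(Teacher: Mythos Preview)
Your argument is correct, and the route differs from the paper's in two respects.  First, instead of testing symmetry of $S$ against a single odd function, the paper applies $\operatorname{sign}(S)\,e^{-S^2/2}$ to obtain the one scalar identity
\[
\E e^{-\|X-Y\|^2/2}=\tfrac12\bigl(\E e^{-\|X-X'\|^2/2}+\E e^{-\|Y-Y'\|^2/2}\bigr),
\]
whereas you first establish the stronger measure-level reduction $2\mu=\nu_1+\nu_2$ (which, incidentally, is exactly the equivalence~\eqref{eq2} that the paper records as a remark \emph{after} the proof).  Second, for the converse the paper works with the Gaussian kernel $1-e^{-\|x-y\|^2/2}$ and simply invokes the result from~\cite{Kl} that this kernel is strongly negative definite on Euclidean and separable Hilbert spaces, so that equality in the associated energy inequality forces $X\stackrel{d}{=}Y$; you instead pick the Laplace kernel $e^{-t\|x-y\|}$, handle $\R^d$ via the Cauchy representation, and for $\go H$ supply a self-contained proof of the characteristic property through stable-$\tfrac12$ subordination followed by the tensor-moment expansion of the Gaussian kernel.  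The paper's version is much shorter because the hard analytic step is outsourced to the reference; your version is longer but independent of it, and your subordination detour in fact lands on the same Gaussian-kernel identity the paper quotes, just with an explicit justification.
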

\begin{proof}
Let $f_1(t)=\E \exp\bigl(it\|X-Y\|\bigr)$, $f_2(t)=\E \exp\bigl(it\|X-X^{\prime}\|\bigr)$, $f_3(t)=\E \exp\bigl(it\|Y-Y^{\prime}\|\bigr)$ be
characteristic functions of $\|X-Y\|$, $\|X-X^{\prime}\|$ and  $\|Y-Y^{\prime}\|$ respectively. It is easy to calculate characteristic function of $S$. It has the following form
\[ f(t) = \E\exp \bigl(itS\bigr) = \frac{1}{2}f_1(t) + \frac{1}{4}\Bigl(f_2(-t)+f_3(-t)\Bigr). \]
In the case of $X \stackrel{d}{=}Y$ we have
$f_1(t)=f_2(t)=f_3(t)$ and
\[ f(t) = \frac{1}{2}\Bigl(f_1(t)+f_1(-t)\Bigr), \]
so that the distribution of $S$ is symmetric around the origin.

Suppose now that the distribution of $S$ is symmetric around the origin and show that $X \stackrel{d}{=}Y$. In the case when this distribution is symmetric, then the mean of $\tt{sign}\,S\, \exp\{-S^2/2\}$ exists and equals to zero. However,
\[ \E\tt{sign}\,S\,\exp\{-S^2/2\} = \frac{1}{2}\E\exp\bigl(-\|X-Y\|^2/2\bigr)-\]
\[-\frac{1}{4}\Bigl(\E\exp\bigl(-\|X-X^{\prime}\|^2/2\bigr) +\E \exp\bigl(-\|Y-Y^{\prime}\|^2/2\bigr) \Bigr). \]
The equality of this expectation to zero implies that
\begin{equation}\label{eqN}
\E\exp\bigl(-\|X-Y\|^2/2\bigr)=
\end{equation}
\[= \frac{1}{2}\Bigl(\E\exp\bigl(-\|X-X^{\prime}\|^2/2\bigr) +\E \exp\bigl(-\|Y-Y^{\prime}\|^2/2\bigr) \Bigr). \]

It is known (see, for example \cite{Kl}), that the function
\[{\mc L}(x,y)= 1-\exp(-\|x-y\|^2/2) \]
is a strongly negative definite kernel on Euclidean or on Hilbert space. At the same time the following inequality holds
\[ \E {\mc L}(X,Y)-\frac{1}{2}\Bigl(\E {\mc L}(X,X^{\prime})+\E {\mc L}(Y,Y^{\prime})\Bigr) \geq 0. \]
The equality is attained if and only if  $X \stackrel{d}{=} Y$. It is clear that the relation (\ref{eqN}) guaranties the equality in previous inequality. 
\end{proof}

{\it Note that from the proof of Theorem \ref{th1} it follows that $X\stackrel{d}{=}Y$ if and only if}
\begin{equation}\label{eq2}
\|X-Y\| \stackrel{d}{=} \varepsilon\|X-X^{\prime}\|+(1-\varepsilon)\|Y-Y^{\prime}\|. 
\end{equation}

In other words, it means that multivariate hypothesis $X\stackrel{d}{=}Y$ may be changed by equivalent one-dimensional (\ref{eq2}).

Let us give previous result in term of an inequality.

\begin{thm}\label{thMI}
Let $X, X^{\prime}$, $Y,Y^{\prime}$, $\varepsilon, \delta$ be mutually independent random elements. Suppose that $X \stackrel{d}{=}X^{\prime}$, $Y \stackrel{d}{=}Y^{\prime}$, and the variables $\varepsilon$ и $\delta$ take the values $0$ and $1$ only with equal probabilities. The random vectors $X$ and $Y$ take values in finite-dimensional Euclidean space or in separable Hilbert space and have continuous distributions. Suppose that $h$ is a strictly convex continuously differentiable function on  $[0,1]$, $h(0)=0$. Denote by $F(x)$ cumulative distribution function of the random variable
\[ S= \delta \|X-Y\| -(1-\delta) \Bigl(\varepsilon\|X-X^{\prime}\|+(1-\varepsilon)\|Y-Y^{\prime}\|\Bigr) \]   
and set $G(x) = 1-F(-x)$. Then the following inequality holds
\begin{equation}\label{eqIM}
\int_{-\infty}^{\infty}h(F(x))dG(x)+\int_{-\infty}^{\infty}h(G(x))dF(x) \geq 2 \int_{0}^{1}h(u)du.
\end{equation}
The equality in (\ref{eqIM}) is attained if and only if $X \stackrel{d}{=}Y$.
\end{thm}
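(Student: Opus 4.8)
The plan is to exploit the concrete form of $S$ to reduce (\ref{eqIM}) to an inequality about the two nonnegative ``blocks'' $A:=\|X-Y\|$ and $B:=\varepsilon\|X-X'\|+(1-\varepsilon)\|Y-Y'\|$, and then to settle that inequality by integration by parts and Young's inequality. First I would record the law of $S$: on $\{\delta=1\}$ one has $S=A\ge 0$, on $\{\delta=0\}$ one has $S=-B\le 0$, and these events have probability $1/2$; since the distributions of $A$ and $B$ are continuous there is no atom at $0$, so with $F_A,F_B$ the distribution functions of $A,B$,
\[ F(x)=\tfrac12+\tfrac12F_A(x)\ \ (x\ge0),\qquad F(x)=\tfrac12\bigl(1-F_B(-x)\bigr)\ \ (x<0), \]
and $G(x)=1-F(-x)$ is the same expression with $A$ and $B$ interchanged. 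Substituting these into the left side of (\ref{eqIM}), splitting each Stieltjes integral over $(-\infty,0)$ and $[0,\infty)$ and changing variables, I expect to arrive at
\[ \int h(F)\,dG+\int h(G)\,dF=\tfrac12\,\E\,\Psi\bigl(F_B(A)\bigr)+\tfrac12\,\E\,\Psi\bigl(F_A(B)\bigr),\quad \Psi(u):=h\!\Bigl(\tfrac{1+u}{2}\Bigr)+h\!\Bigl(\tfrac{1-u}{2}\Bigr), \]
while the substitution $v=\tfrac{1\pm u}{2}$ gives $2\int_0^1 h(u)\,du=\int_0^1\Psi(u)\,du$. Note $\Psi$ is strictly convex on $[0,1]$ because $h'$ is strictly increasing. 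Hence (\ref{eqIM}) is equivalent to $\E\Psi(F_B(A))+\E\Psi(F_A(B))\ge 2\int_0^1\Psi(u)\,du$, and the equality case to $F_A\equiv F_B$.

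The heart of the matter is the general fact that if $U,V$ have continuous distribution functions $F_U,F_V$ and $g$ is convex on $[0,1]$, then $\E g(F_V(U))+\E g(F_U(V))\ge 2\int_0^1 g(u)\,du$, with equality, when $g$ is strictly convex, if and only if $F_U\equiv F_V$. To prove it I would use the quantile transform: with $W$ uniform on $[0,1]$ and $\beta:=F_V\circ F_U^{-1}$ a nondecreasing map of $[0,1]$ onto itself, one has $F_V(U)\stackrel{d}{=}\beta(W)$ and $F_U(V)\stackrel{d}{=}\beta^{-1}(W)$, so the claim becomes $\int_0^1 g(\beta(u))\,du+\int_0^1 g(\beta^{-1}(u))\,du\ge 2\int_0^1 g(u)\,du$. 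The substitution $u=\beta(v)$ in the second integral followed by integration by parts in both integrals turns this into
\[ \int_0^1 g'(u)\,\bigl(2u-\beta(u)-\beta^{-1}(u)\bigr)\,du\ge 0. \]
Now $\int_0^1\bigl(2u-\beta(u)-\beta^{-1}(u)\bigr)du=0$ (the areas under $\beta$ and $\beta^{-1}$ sum to $1$), and Young's inequality applied to the increasing function $\beta$ and its inverse gives $\int_0^t\beta(u)\,du+\int_0^t\beta^{-1}(u)\,du\ge t^2$ for every $t$, with equality for all $t$ only if $\beta=\mathrm{id}$; hence $k(u):=2u-\beta(u)-\beta^{-1}(u)$ satisfies $\int_s^1 k(u)\,du\ge 0$ for all $s$. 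Since $g$ is convex, $dg'$ is a nonnegative measure, and a further integration by parts (or Fubini) gives $\int_0^1 g'k=\int_{[0,1]}\bigl(\int_s^1 k(u)\,du\bigr)\,dg'(s)\ge 0$. If $g$ is strictly convex then $dg'$ charges every subinterval, so equality forces $\int_s^1 k\equiv0$, i.e. $\beta=\mathrm{id}$, i.e. $F_U\equiv F_V$.

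Applying this lemma with $g=\Psi$ (strictly convex because $h$ is), $U=A$, $V=B$ yields (\ref{eqIM}), and equality holds precisely when $F_A\equiv F_B$, that is when $\|X-Y\|\stackrel{d}{=}\varepsilon\|X-X'\|+(1-\varepsilon)\|Y-Y'\|$; by the remark following Theorem \ref{th1} (relation (\ref{eq2})) this is equivalent to $X\stackrel{d}{=}Y$. The step I expect to be the main obstacle is the pair of integrations by parts in the lemma together with the observation that the leftover integrand is controlled exactly by Young's inequality, and then reading off the equality case, which is where strict convexity of $h$ is genuinely used. A minor technical point is that continuity of the distributions does not make $F_A,F_B$ strictly increasing, so $\beta$ and $\beta^{-1}$ must be understood as generalized inverses; the identities above are unaffected but should be verified.
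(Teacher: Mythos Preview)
Your argument is correct, but it is considerably more elaborate than the paper's. The paper's proof is two lines: it quotes the lemma from \cite{KV} --- exactly your ``heart of the matter'' statement, namely that for continuous c.d.f.'s $F,G$ and strictly convex $C^1$ function $h$ with $h(0)=0$ one has $\int h(F)\,dG+\int h(G)\,dF\ge 2\int_0^1 h$, with equality iff $F\equiv G$ --- and applies it directly to the pair $F$, $G(x)=1-F(-x)$ (the latter being the c.d.f.\ of $-S$). Equality then means $F\equiv G$, i.e.\ $S$ is symmetric, and Theorem \ref{th1} converts this into $X\stackrel{d}{=}Y$.

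Two remarks on your route. First, the whole decomposition of $S$ into $A=\|X-Y\|$ and $B=\varepsilon\|X-X'\|+(1-\varepsilon)\|Y-Y'\|$ and the passage $h\mapsto\Psi$ are unnecessary: since $G$ is itself a continuous c.d.f., your general lemma applies verbatim to $(F,G,h)$ and gives (\ref{eqIM}) in one stroke, with equality iff $F\equiv G$. Your reduction to $(F_A,F_B,\Psi)$ is a correct but longer way to reach the same place. Second, what you genuinely add relative to the paper is a self-contained proof of the \cite{KV} lemma via the quantile map $\beta=F_V\circ F_U^{-1}$, integration by parts, and Young's inequality; the paper simply cites the lemma. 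Your Young-inequality step and the reading of the equality case are fine; just be careful, as you note, that $\beta$ and $\beta^{-1}$ must be taken as generalized inverses when $F_U,F_V$ are merely continuous, and the identities $\int_0^1 g(\beta^{-1}(u))\,du=\int_0^1 g(v)\,d\beta(v)$ and $\int_0^1(\beta+\beta^{-1})=1$ should be checked in that generality.
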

\begin{proof}
In our publication \cite{KV} there was given the following result.

{\it Let $h$ be strictly convex continuously differentiable function on $[0,1]$ such that $h(0)=0$. Suppose that $F(x)$ and $G(x)$ are continuous probability distribution functions (c.d.f.) on real line $\R^1$. 
Then
\begin{equation}\label{eqC}
\int_{-\infty}^{\infty}h(F(x))dG(x)+\int_{-\infty}^{\infty}h(G(x))dF(x) \geq 2 \int_{0}^{1}h(u)du
\end{equation}
with equality if and only if $F(x)=G(x)$.} 

It is clear that the statement of Theorem \ref{thMI} follows from this result and previous Theorem \ref{th1}.
\end{proof}

From above we see that from statistical point the testing of the hypothesis 
\[H_o: \quad X\stackrel{d}{=}Y \] 
in multidimensional case appears to be equivalent to
\[ \widetilde{H}_o: \quad \text{"Distribution of $S$ is symmetric around the origin".}\]
There are many  different tests for $\widetilde{H}_o$ and some of them are consistent.
However, in some cases the hypothesis is tested by different procedures. For example, one of such procedures is based on a property of median. Namely, if the distribution is symmetric around the origin then its median has to be zero. Therefore, if the median does not coincides with origin the hypothesis is rejected. This approach does not work in the case under consideration in view of the fact that median equals to zero for arbitrary distributed random vectors $X$ and $Y$.

It is clear that one may change the norm in construction of $S$ by arbitrary strongly negative definite kernel (see \cite{Kl} about terminology). This means, that it is possible to consider not only multidimensional Euclidean spaces or Hilbert spaces, but such metric spaces which allows isometric embedding into Hilbert space. We will consider some such spaces in a separate publication.

\section*{Acknowledgments} This work was partially supported by Grant GA\v{C}R 16-03708S. 
Authors are gratefull to Dr. K. Helisov\'{a} for her kind attention and useful advices.

\end{document}